\documentclass[12pt,a4paper]{article}
\usepackage{blindtext}
\usepackage{enumerate, tikzsymbols}
\usepackage{amssymb}
\usepackage{amsmath}
\usepackage{latexsym}
\usepackage{amsthm}
\usepackage{xypic}
\usepackage{authblk, hyperref}

\usepackage{amsmath}

\newtheorem{theorem}{Theorem}

\newtheorem{proposition}{Proposition}
\newtheorem{cor}{Corollary}
\theoremstyle{definition}

\theoremstyle{remark}
\newtheorem{remark}[theorem]{Remark}

\theoremstyle{claim}
\newtheorem{claim}[]{Claim}

\theoremstyle{question}

\numberwithin{equation}{section}

\usepackage[english]{babel}

\newcommand{\Z}{\mathbb{Z}}
\newcommand{\Q}{\mathbb{Q}}
\newcommand{\Qbar}{\bar{\mathbb{Q}}}
\newcommand{\Gal}{\mathrm{Gal}}

\newcommand{\kAtor}{k(A_{\mathrm{tors}})}
\newcommand{\KAtor}{K(A_{\mathrm{tors}})}
\newcommand{\GL}{\mathrm{GL}}

\usepackage{etoolbox}
\usepackage{hyperref}

\makeatletter
\newcounter{author}
\renewcommand*\author[1]{%
  \stepcounter{author}%
  \ifnum\c@author=1
    \gdef\@author{#1}%
  \else
    \xdef\@author{\unexpanded\expandafter{\@author\and#1}}%
  \fi
  \csgdef{author@\the\c@author}{#1}}
\newcommand*\email[1]{%
  \csgdef{email@\the\c@author}{#1}}
\newcommand*\address[1]{%
  \csgdef{address@\the\c@author}{#1}}
\AtEndDocument{%
  \xdef\author@count{\the\c@author}%
  \c@author=1
  \print@authors}
\newcommand*\print@authors{%
  \ifnum\c@author>\author@count
  \else
    \print@author{\the\c@author}%
    \advance\c@author by 1
    \expandafter\print@authors
  \fi}
\newcommand*\print@author[1]{%
  \par\medskip
  \begin{tabular}{@{}l@{}}%
    \textsc{Addresses:}\\
    \csuse{address@#1}
  \end{tabular}}
\makeatother

\title{On a Galois property of fields generated by the torsion of an abelian variety}
\author{S. Checcoli and G. A. Dill}
\address{S. Checcoli: \\ Univ. Grenoble Alpes, CNRS, IF, 38000 Grenoble, France. \\\href{mailto: sara.checcoli@univ-grenoble-alpes.fr}{sara.checcoli@univ-grenoble-alpes.fr}\\
\\ G. A. Dill: \\ Leibniz Universit\"at Hannover,\\
Institut f\"ur Algebra, Zahlentheorie und Diskrete Mathematik,\\
Welfengarten 1,\\
30167 Hannover,\\
Germany.\\
\href{mailto: dill@math.uni-hannover.de}{dill@math.uni-hannover.de}\\ \\
Mathematisches Institut der Universit\"at Bonn,\\
Endenicher Allee 60,\\
53115 Bonn,\\
Germany.\\
\href{mailto: dill@math.uni-bonn.de}{dill@math.uni-bonn.de}}
\date{\today}

\newcommand\extrafootertext[1]{%
    \bgroup
    \renewcommand\thefootnote{\fnsymbol{footnote}}%
    \renewcommand\thempfootnote{\fnsymbol{mpfootnote}}%
    \footnotetext[0]{#1}%
    \egroup
}

\begin{document}
\maketitle
\extrafootertext{2020 \textit{Mathematics Subject Classification}. 11J95, 11R32.}
\abstract{In this article, we study a certain Galois property of subextensions of $k(A_{\mathrm{tors}})$, the minimal field of definition of all torsion points of an abelian variety $A$ defined over a number field $k$. Concretely, we show that each subfield of $k(A_{\mathrm{tors}})$ which is Galois over $k$ (of possibly infinite degree) and whose Galois group has finite exponent is contained in an abelian extension of some finite extension of $k$. As an immediate corollary of this result and a theorem of Bombieri and Zannier, we deduce that each such field has the Northcott property, \emph{i.e.} does not contain any infinite set of algebraic numbers of bounded height.}

\section{Introduction}
Throughout this article, we let $\Qbar$ be a fixed, once and for all, algebraic closure of $\Q$ and all algebraic extensions of $\mathbb{Q}$ that appear will be assumed to be subfields of $\Qbar$.

Let $G$ be an algebraic group defined over a number field $k$. Let $k(G_{\mathrm{tors}})$ denote the smallest subfield of $\Qbar$ containing $k$ over which all torsion points of $G$ are defined. For a number field $M$, we let $M^{\mathrm{ab}}$  denote the maximal abelian extension of $M$. Recall that a group has exponent $e$ if all its elements are torsion and the least common multiple of their orders equals $e$.
The goal of the present article is to prove the following theorem:
\begin{theorem}\label{mainav}
Let $A$ be an abelian variety defined over a number field $k$.
Let $e\geq 1$ be an integer. Then there exists a finite extension $M$ of $k$ that depends only on $A$, $k$, and $e$ such that for any subfield $F\subseteq k(A_{\mathrm{tors}})$ with $F/k$ Galois and $\Gal(F/k)$ of exponent at most $e$, one has $F\subseteq M^{\mathrm{ab}}$.
\end{theorem}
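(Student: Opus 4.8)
My plan is to recast the assertion as a statement about the profinite group $G=\Gal(k(A_{\mathrm{tors}})/k)$, reduce it prime by prime, and then feed in the structure theory of the $\ell$-adic Galois representations attached to $A$.

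First I would reduce to a purely group-theoretic claim. Since a group of exponent $\le e$ has exponent dividing $\mathrm{lcm}(1,\dots,e)$, I may assume $\Gal(F/k)$ has exponent dividing a fixed $e$. Let $P=\overline{\langle g^{e}:g\in G\rangle}$ be the closed normal subgroup topologically generated by the $e$-th powers; the admissible $F$ are precisely the fixed fields $k(A_{\mathrm{tors}})^{N}$ of closed normal $N\trianglelefteq G$ with $P\subseteq N$. It therefore suffices to find an \emph{open} subgroup $H\le G$ with $[H,H]\subseteq P$: for then $M:=k(A_{\mathrm{tors}})^{H}$ is a number field, finite over $k$ and depending only on $A$, $k$, $e$, and $\Gal(FM/M)\cong H/(H\cap N)$ is abelian for every admissible $F$, giving $F\subseteq FM\subseteq M^{\mathrm{ab}}$. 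Equivalently, I must show that $G/P$ is virtually abelian, possibly after replacing $k$ by a finite extension.

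Next I would work prime by prime. Regard $G$ as a closed subgroup of $\prod_{\ell}\GL_{2g}(\Z_{\ell})$ ($g=\dim A$); it is a subdirect product of its projections $G_{\ell}$. Setting $Q=G/P$ and $Q_{\ell}=G_{\ell}/\overline{\langle x^{e}:x\in G_{\ell}\rangle}$, one checks, using that every finite quotient of $G$ factors through finitely many of the $G_{\ell}$, that $Q$ embeds as a subdirect product into $\prod_{\ell}Q_{\ell}$. Since each $G_{\ell}$ is a compact $\ell$-adic Lie group it is topologically finitely generated, so $Q_{\ell}$ is a finitely generated profinite group of finite exponent, hence \emph{finite} by Zelmanov's solution of the restricted Burnside problem. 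Therefore it is enough to prove that, after a finite base change, $Q_{\ell}$ is abelian for all but finitely many $\ell$: intersecting $Q$ with the product over those $\ell$ then gives an open abelian subgroup (only finitely many finite ``bad'' factors are removed), and pulling back to $G$ yields the required $H$. To analyse $Q_{\ell}$, note that for $\ell\nmid e$ with $\ell\ge 3$ the principal congruence subgroup $G_{\ell}\cap\ker(\GL_{2g}(\Z_{\ell})\to\GL_{2g}(\mathbb{F}_{\ell}))$ is torsion-free pro-$\ell$, so the $e$-th power map is surjective on it ($e$ being a unit in $\Z_{\ell}$) and it lies inside $\overline{\langle x^{e}\rangle}$; moreover any unipotent matrix in $\GL_{2g}(\mathbb{F}_{\ell})$ has $\ell$-power order and is thus an $e$-th power. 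Hence $Q_{\ell}$ is a quotient of $\bar G_{\ell}/U_{\ell}$, where $\bar G_{\ell}\subseteq\GL_{2g}(\mathbb{F}_{\ell})$ is the reduction of $G_{\ell}$ modulo $\ell$ and $U_{\ell}$ is the normal subgroup of $\bar G_{\ell}$ generated by its unipotent elements.

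The hard part, and the crux of everything, is then to show that after a finite base change the commutator subgroup of the mod-$\ell$ image $\bar G_{\ell}$ is generated by unipotent elements for all but finitely many $\ell$ — equivalently, that $\bar G_{\ell}/U_{\ell}$, and hence $Q_{\ell}$, is abelian. This is an input from the theory of $\ell$-adic representations of abelian varieties. By Faltings the rational Tate modules are semisimple Galois modules, so the $\ell$-adic monodromy groups are reductive; by a theorem of Serre there is one finite extension of $k$ over which all of them become connected; and for $\ell$ large the mod-$\ell$ image is (essentially) sandwiched between the $\mathbb{F}_{\ell}$-points of the derived group of the (connected reductive) monodromy and those of the whole group. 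Lang's theorem over finite fields makes the quotient by the derived group abelian, so $[\bar G_{\ell},\bar G_{\ell}]$ lies in the derived part, which is semisimple and hence generated by its unipotents up to a bounded discrepancy stemming from its fundamental group; one more finite base change, uniform in $\ell$, removes that discrepancy. That a base change is genuinely necessary is already visible for CM elliptic curves with $e$ even, where the Weyl-group part of the monodromy survives modulo $e$-th powers over $k$ but dies over the CM field. With this in hand, the reduction above completes the proof.
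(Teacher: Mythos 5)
Your skeleton is close to the paper's actual proof (prime-by-prime reduction; for large $\ell$ the congruence kernel consists of $e$-th powers and one must show that the mod-$\ell$ image modulo the normal subgroup generated by unipotents is abelian; the finitely many remaining primes are handled separately --- you via Zelmanov, the paper via the $p$-adic exponential map, both legitimate), but two steps are genuinely gapped as written. First, the claim that $Q=G/P$ embeds into $\prod_{\ell}Q_{\ell}$ is false for a general closed subdirect product, and your justification (every finite quotient of $G$ factors through finitely many $G_{\ell}$) only gives an embedding of $Q$ into $\varprojlim_{S}G_{S}/P_{S}$ over finite sets $S$ of primes, not into $\prod_{\ell}Q_{\ell}$. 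For instance, take $G=\{(a,b)\in \Z/4\Z\times\Z/4\Z : a\equiv b\ (\mathrm{mod}\ 2)\}$ and $e=2$: then $P=\{(0,0),(2,2)\}$, but the class of $(2,0)$ is a nontrivial element of $Q$ dying in $Q_{1}\times Q_{2}$. The repair is exactly the independence of the $\ell$-adic representations after a finite base change (Serre \cite{Ser13}, i.e.\ Proposition \ref{properties-gal-rap}\eqref{property:directproduct} of the paper), which makes $G=\prod_{\ell}G_{\ell}$, hence $P=\prod_{\ell}P_{\ell}$ and $Q=\prod_{\ell}Q_{\ell}$; you allow base changes throughout but never invoke this input, and your stated reason does not substitute for it.

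Second, the crux for large $\ell$. What you need is that $\bar G_{\ell}$ modulo the normal subgroup generated by its unipotent elements is abelian, and your sketch is off in two ways: (i) the lower bound of your ``sandwich'' should be the image of $\widetilde{G}'(\mathbb{F}_{\ell})$ (the simply connected cover of the derived group), equivalently the subgroup generated by unipotents, not the full group $G'(\mathbb{F}_{\ell})$ of points of the derived group; and even that containment, uniformly for large $\ell$, is precisely the deep input (Serre, Wintenberger, Zywina) that the paper packages as the reductive model plus the uniform index bound in $\mathcal{G}_{\ell}(\Z_{\ell})$ together with Hensel's lemma; (ii) a further finite extension of the number field cannot ``remove the discrepancy stemming from the fundamental group'': base change only shrinks the Galois image and has no effect on the cokernel of $\widetilde{G}'(\mathbb{F}_{\ell})\to G'(\mathbb{F}_{\ell})$. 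Fortunately no removal is needed: as in Proposition \ref{prop-MO} of the paper, the commutator morphism of the ambient connected reductive group factors through the simply connected cover of its derived group, so the commutator subgroup of $G(\mathbb{F}_{\ell})$ already lies in the image of $\widetilde{G}'(\mathbb{F}_{\ell})$, which is generated by unipotents by Steinberg's theorem; this is the lemma you must prove or cite instead of the base-change step. With these two repairs your argument goes through and is essentially the paper's, the main genuine difference being that you dispose of the finitely many small primes by Zelmanov's solution of the restricted Burnside problem (each $Q_{\ell}$ is a topologically finitely generated profinite group of finite exponent, hence finite), whereas the paper keeps the argument self-contained there via the exponential map of $\mathcal{G}_{p}$.
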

If, instead of $A$,  we consider $G=\mathbb{G}_m$, the multiplicative torus defined over a number field $k$, the analogue of Theorem \ref{mainav} is trivial (indeed, in this case, $k(G_{\mathrm{tors}})\subseteq k^{\mathrm{ab}}$). More interestingly, it can be proved, combining the seminal result \cite[Theorem 1]{BZ} with \cite[Proposition 2.1]{CZ}, that every extension of $k$ contained in $k(G_{\mathrm{tors}})$ (and more generally in $k^{\mathrm{ab}}$) and having Galois group of finite exponent satisfies the so-called Northcott property (N) introduced by Bombieri and Zannier in \cite{BZ}. More precisely, a subfield of $\Qbar$ is said to have \emph{property (N)} if it does not contain any infinite set of algebraic numbers of bounded absolute logarithmic Weil height. 

While, by Northcott's theorem (see for instance  \cite[Theorem 1.6.8]{BoGu}), property (N) is clearly satisfied by number fields, determining its validity for infinite extensions of $\Q$ is generally a hard problem. The fields described above are, thanks to \cite[Theorem 1]{BZ}, the first (and among the few) known examples of fields with this property.

One of our main motivations for this article was to prove an analogous result for abelian varieties. This is provided by the following corollary:
\begin{cor}\label{cor-main} Let $G=\mathbb{G}_m$ or $G=A$ with $A$ an abelian variety defined over a number field $k$. Let $k(G_{\mathrm{tors}})$ denote the smallest subfield of $\Qbar$ containing $k$ over which every torsion point of $G$ is defined. Then every subfield of $k(G_{\mathrm{tors}})$ which is Galois over $k$ and whose Galois group has finite exponent has property (N).
\end{cor}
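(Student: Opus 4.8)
The plan is to deduce Corollary~\ref{cor-main} directly from Theorem~\ref{mainav} together with the result of Bombieri and Zannier \cite{BZ} that property (N) holds for $k^{(d)}_{\mathrm{ab}}$, the compositum of all abelian extensions of $k$ of degree at most $d$. Let $F \subseteq k(G_{\mathrm{tors}})$ be Galois over $k$ with $\Gal(F/k)$ of finite exponent, say of exponent $e$. We treat the two cases $G = \mathbb{G}_m$ and $G = A$ in parallel: in the multiplicative case $k(G_{\mathrm{tors}}) = k(\mu_\infty) \subseteq k^{\mathrm{ab}}$ already, so we may take $M = k$ and skip the first step; in the abelian variety case we apply Theorem~\ref{mainav} with this $e$ to obtain a finite extension $M/k$ (depending only on $A$, $k$, $e$) such that $F \subseteq M^{\mathrm{ab}}$.

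The key remaining step is to descend from ``$F$ sits inside $M^{\mathrm{ab}}$ with $\Gal(F/k)$ of finite exponent'' to ``$F$ sits inside $M^{(d)}_{\mathrm{ab}}$ for some $d$.'' First I would bound the exponent of $\Gal(F/M)$: since $\Gal(F/M)$ is a subgroup of $\Gal(F/k)$ it also has exponent at most $e$. Now $F/M$ is an abelian extension of $M$ whose Galois group has exponent dividing $e$, so by the argument of \cite[Proposition 2.1]{CZ} (quoted in the introduction) $F$ is contained in $M^{(d)}_{\mathrm{ab}}$ for some positive integer $d$ --- concretely, every finite subextension of $F/M$ is abelian of exponent dividing $e$ over $M$, hence a compositum of cyclic extensions of $M$ of degree dividing $e$, so each such subextension, and therefore all of $F$, lies in the compositum of all abelian extensions of $M$ of degree at most $e$. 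Thus we may take $d = e$. Finally, $M^{(d)}_{\mathrm{ab}}$ has property (N) by \cite{BZ}, and any subfield of a field with property (N) again has property (N) (an infinite set of bounded height in the subfield would be one in the larger field), so $F$ has property (N). In the case $G = \mathbb{G}_m$ the same reasoning applies verbatim with $M = k$.

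The only genuine subtlety --- and the step I would be most careful about --- is the passage from the finite-exponent abelian extension $F/M$ to containment in $M^{(d)}_{\mathrm{ab}}$, i.e. making precise the reference to \cite[Proposition 2.1]{CZ}. The point is that an abelian torsion group of exponent $e$ need not be finite, but it is a (possibly infinite) direct sum of cyclic groups of order dividing $e$; translating this through Galois theory, $F$ is the compositum of cyclic extensions of $M$ each of degree dividing $e$, and this compositum is by definition contained in $M^{(e)}_{\mathrm{ab}}$. One should note that $\Gal(F/M)$ being of exponent $e$ does not by itself force $\Gal(F/M)$ to be \emph{topologically} finitely generated, but this is irrelevant: the definition of $M^{(d)}_{\mathrm{ab}}$ as a compositum already accommodates infinite Galois groups. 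With this observation in place the corollary follows immediately, and no further computation is required.
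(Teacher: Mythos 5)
Your proposal takes essentially the same route as the paper: Theorem \ref{mainav} to get $F \subseteq M^{\mathrm{ab}}$, the argument from \cite[Proposition 2.1]{CZ} to place $F$ inside a field of the form $L^{(e)}_{\mathrm{ab}}$, the Bombieri--Zannier theorem for property (N), and the observation that (N) passes to subfields; the $\mathbb{G}_m$ case with $M=k$ is also handled as in the paper. The one imprecision is your use of ``$\Gal(F/M)$'': Theorem \ref{mainav} gives $F \subseteq M^{\mathrm{ab}}$, but $M$ need not be contained in $F$, so $\Gal(F/M)$ is not defined and in particular is not literally a subgroup of $\Gal(F/k)$; likewise ``finite subextensions of $F/M$'' presupposes $M\subseteq F$. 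This is exactly the point the paper handles by setting $L = F \cap M$: then $\Gal(F/L)$ \emph{is} a subgroup of $\Gal(F/k)$ (so of exponent at most $e$) and is abelian because $\Gal(F/L) \simeq \Gal(FM/M)$ and $FM \subseteq M^{\mathrm{ab}}$, which yields $F \subseteq L^{(e)}_{\mathrm{ab}}$ with $L$ a number field. Alternatively you could replace $F$ by $FM$ and conclude $F \subseteq FM \subseteq M^{(e)}_{\mathrm{ab}}$. With that repair, the remainder of your argument (reduction to finite subextensions, writing a finite abelian group of exponent at most $e$ as a product of cyclic groups of order at most $e$, taking $d=e$) is precisely the argument the paper reproduces from \cite{CZ}, so your proof is correct up to this easily fixed slip.
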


The proof of Theorem \ref{mainav} is carried out in Section \ref{Sec-proof-main}. 
We remark that, unlike the toric case, the extension $k(A_{\mathrm{tors}})/k$ is highly non-abelian and our proof strategy relies on several important properties of the Galois representations associated with the Tate module of $A$ (summarized in the key Proposition \ref{properties-gal-rap}).

To briefly outline the main steps of the proof, we first notice that it suffices to prove Theorem \ref{mainav} if the extension $F/k$ is finite and therefore contained in some $k(A[n])$ where $A[n]$ denotes the kernel of multiplication by $n$ on $A(\Qbar)$. In Subsection \ref{Subsec-reduction-steps}, we show furthermore how to reduce the proof of Theorem \ref{mainav} to the construction of two subgroups $W_n'$ and $W_n''$ of $\mathrm{Gal}(k(A[n])/k)$ satisfying certain conditions. As we recall in Proposition \ref{properties-gal-rap} in Section \ref{prelim}, up to replacing $k$ by a fixed finite extension, we can write $\mathrm{Gal}(k(A[n])/k)$ canonically as the product of the Galois groups $\mathrm{Gal}(k(A[p^t])/k)$ for $p^t$ exactly dividing $n$. Moreover, for each prime $p$, the Galois representation on the $p$-adic Tate module of $A$ yields an algebraic group $\mathcal{G}_p$ over $\mathbb{Z}_p$, which has nice properties for all primes larger than some constant $c$. We then construct $W_n'$ and $W_n''$ in such a way that we can check the required conditions working ``prime by prime". For small primes $p$ (\emph{i.e.} those less than or equal to $\max\{c,e\}$), we use the $p$-adic exponential map associated to $\mathcal{G}_p$ while for large primes $p$, we use a group-theoretic property of $\mathcal{G}_p(\mathbb{F}_p)$ (see Proposition \ref{prop-MO} in Section \ref{prelim}).

In Section \ref{rem-Hab}, which is inspired by personal communication with Habegger, we discuss the validity of Theorem \ref{mainav} over other base fields: in particular, we show that, while it holds if $k$ is a finite field or a $p$-adic field, there are infinite extensions of $\Q$ over which Theorem \ref{mainav} is false. We remark that recently, motivated by the results of the present paper,  Gajda and Petersen  proved in \cite{GP} that Theorem \ref{mainav} holds over any field $k$ that is finitely generated over either its prime field, an algebraically closed field, or a local field.  
Finally, Corollary  \ref{cor-main} is proved in Section \ref{proof-cor} and is an easy consequence of our Theorem \ref{mainav}, \cite[Theorem 1]{BZ}, and  \cite[Proposition 2.1]{CZ}.

\section{Preliminaries on abelian varieties}\label{prelim}

Let $A$ be an abelian variety defined over a number field $K \subseteq \Qbar$.

We fix some notation. Let $K_n=K(A[n])$ be the smallest subfield of $\Qbar$ containing $K$ over which every point of $A[n]$, the group of torsion points of $A$ of order dividing $n$, is defined. Then the extension $K_n/K$ is finite and Galois. Set $g = \dim A$ and $G_n=\Gal(K_n/K)$, which we identify with a subgroup of $\GL_{2g}(\Z/n\Z)$ after having fixed once and for all a compatible system of isomorphisms $A[n] \simeq (\Z/n\Z)^{2g}$. 

For a prime number $p$, we consider the representation \[\rho_p: \Gal(\Qbar/K) \to \GL_{2g}(\Z_p),\] coming from the $p$-adic Tate module
\[ \varprojlim{A[p^n]} \simeq \varprojlim{(\Z/p^n\Z)^{2g}} \simeq \Z_p^{2g} \]
of $A$. We let $\mathcal{G}_p$ denote the Zariski closure of $\rho_p(\Gal(\Qbar/K))$ 
in $\mathrm{GL}_{2g,\Z_p}$ endowed with the unique structure of a reduced closed subscheme of $\mathrm{GL}_{2g,\Z_p}$.

Set now $K_{p^{\infty}} = \bigcup_{n \geq 0}{K_{p^n}}$, it is a Galois extension of $K$. Let $G_{p^\infty}$ denote its Galois group over $K$. Note that $\rho_p$ factors through the restriction homomorphism $\Gal(\Qbar/K) \to G_{p^{\infty}}$. Furthermore, the induced homomorphism $G_{p^{\infty}} \to \GL_{2g}(\Z_p)$ is injective and we identify $G_{p^{\infty}}$ with its image in $\GL_{2g}(\Z_p)$.

We collect here some well-known important classical results from the theory of Galois representations attached to abelian varieties over number fields. Recall that a smooth algebraic group $G$ over $\mathbb{Z}_p$ is called \emph{reductive} if both $G \otimes_{\mathbb{Z}_p} \mathbb{Q}_p$ and $G \otimes_{\mathbb{Z}_p} \mathbb{F}_p$ are reductive algebraic groups in the usual sense.
\begin{proposition}\label{properties-gal-rap} Let $A$ be an abelian variety defined over a number field $k$.
There exist a finite extension $K/k$ and a constant $c = c(A,K)>0$ such that, with notation as above, the following properties hold true:
\begin{enumerate}[(i)]
\item\label{property:directproduct}  for any coprime natural numbers $n'$ and $n''$, the canonical homomorphism $G_{n'n''} \to G_{n'} \times G_{n''}$ is an isomorphism,
\item\label{property:connected} the algebraic group $\mathcal{G}_p$ over $\Z_p$ is connected for every prime $p$,
\item\label{property:smoothandreductive} $\mathcal{G}_p$ is smooth and reductive for every prime $p > c$, and
\item\label{property:index} for every prime $p$, the index $[\mathcal{G}_p(\Z_p):\rho_p(\Gal(\Qbar/K))]$ is bounded from above by $c$. 
\end{enumerate}
\end{proposition}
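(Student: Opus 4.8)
\emph{Overall strategy and the choice of $K$.} Each of the four assertions is classical for a single prime $p$; the whole point is to make the finite extension $K/k$ and the constant $c$ work \emph{uniformly in $p$}. The plan is to assemble the statement from the standard theory of $\ell$-adic representations attached to abelian varieties, choosing $K$ in two stages. First, for a fixed $p$ the identity component $\mathcal{G}_p^{\circ}$ cuts out a finite extension of $k$ (the fixed field of $\rho_p^{-1}(\mathcal{G}_p^{\circ}(\Q_p))$), over which $\mathcal{G}_p$ becomes connected. To do this for all $p$ simultaneously I would invoke Serre's results on $\ell$-adic representations: the order of $\pi_0(\mathcal{G}_p)$ is bounded independently of $p$, and in fact there is a single finite extension $K_1/k$ with $\rho_p(\Gal(\Qbar/K_1)) \subseteq \mathcal{G}_p^{\circ}(\Q_p)$ for every prime $p$. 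Replacing $k$ by $K_1$ (and $\mathcal{G}_p$ by the corresponding Zariski closure) yields \eqref{property:connected}; note connectedness is preserved under any further finite extension.

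\emph{The direct product decomposition \eqref{property:directproduct}.} Since $A[n'n''] = A[n'] \oplus A[n'']$ for coprime $n',n''$, one has $K_{n'n''} = K_{n'}K_{n''}$, so $G_{n'n''} \to G_{n'} \times G_{n''}$ is always injective and is an isomorphism precisely when $K_{n'} \cap K_{n''} = K$. As $K_n \subseteq \prod_{p \mid n} K_{p^{\infty}}$, it suffices to arrange that the family $(K_{p^{\infty}})_{p}$ is linearly disjoint over $K$, which is exactly (a consequence of) Serre's independence theorem for the system $(\rho_p)_p$, valid after a suitable finite base change. So I would enlarge $K_1$ to a finite extension $K$ over which this holds; this is compatible with the previous step.

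\emph{Reductivity and smoothness \eqref{property:smoothandreductive}, and the index bound \eqref{property:index}.} Reductivity of $\mathcal{G}_p \otimes_{\Z_p} \Q_p$ for every $p$ follows from Faltings' semisimplicity theorem for the Galois module $V_p(A)$: the standard representation of $\mathcal{G}_p$ is faithful, its $\mathcal{G}_p$-submodules coincide with the $\rho_p(\Gal(\Qbar/K))$-submodules by Zariski density, hence it is semisimple, and a connected algebraic group (in characteristic $0$) with a faithful semisimple representation is reductive. For $p$ large one upgrades this to smoothness of $\mathcal{G}_p$ over $\Z_p$ with reductive special fibre: by Serre's results, refined by Larsen--Pink, for all but finitely many $p$ the image $\rho_p(\Gal(\Qbar/K))$ is a hyperspecial maximal compact subgroup of $\mathcal{G}_p(\Q_p)$, so $\mathcal{G}_p$ is a reductive $\Z_p$-group scheme and $\rho_p(\Gal(\Qbar/K)) = \mathcal{G}_p(\Z_p)$. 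For \eqref{property:index}: Bogomolov's theorem gives that $\rho_p(\Gal(\Qbar/K))$ is open, hence of finite index, in $\mathcal{G}_p(\Z_p)$ for each individual $p$; the large-$p$ statement just quoted (via a Frattini-type argument forcing equality, or at worst an index bounded by a small absolute constant) together with the finitely many remaining primes, taking the maximum of all these indices, produces a uniform $c$ that also bounds $|\pi_0(\mathcal{G}_p)|$ and the degrees handled above.

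\emph{Main obstacle.} The difficulty is not in any single prime but in the uniformity: that one $K$ and one $c$ serve all $p$. This uniformity rests squarely on the $\ell$-independence results of Serre (and Larsen--Pink) and on the hyperspecial structure of the image for large $p$; these are the inputs I would cite most carefully, whereas Faltings' semisimplicity and Bogomolov's openness theorem enter only at a fixed prime.
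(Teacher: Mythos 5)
Your handling of (i) and (ii) is essentially the paper's: both are quoted from Serre (the independence criterion for the family $(\rho_p)_p$ after a finite base change for (i), and the finiteness of the connectedness field, from the letters to Ribet, for (ii)), and your reductions (isomorphism in (i) iff $K_{n'}\cap K_{n''}=K$; connectedness preserved under further finite extension of the base) are correct.

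The genuine gap is in the uniform input you use for (iii) and (iv). You invoke the assertion that for all but finitely many $p$ the image $\rho_p(\Gal(\Qbar/K))$ is a hyperspecial maximal compact subgroup of $\mathcal{G}_p(\Q_p)$, attributing it to Serre refined by Larsen--Pink. For a general abelian variety this is not a theorem: Larsen's maximality result only yields a set of primes of Dirichlet density one, and equality $\rho_p(\Gal(\Qbar/K))=\mathcal{G}_p(\Z_p)$ for all sufficiently large $p$ is open in general; your fallback ``or at worst an index bounded by a small absolute constant'' via a Frattini-type argument is precisely the statement that needs proof and is not supplied by anything you cite. Moreover, even granting hyperspecialness, it does not immediately follow that the schematic closure $\mathcal{G}_p\subseteq \GL_{2g,\Z_p}$ (which is how the paper defines $\mathcal{G}_p$) is the smooth reductive model: a flat closure can have the right $\Z_p$-points without being smooth. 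What is actually known, and what the paper cites, is Wintenberger's theorem that for $p$ larger than a constant the Zariski closure itself is smooth and reductive over $\Z_p$ (this gives (iii) directly and sidesteps the model-identification issue), and Serre's bounded-index theorem, made effective by Zywina, which gives (iv) for all $p$ at once; Bogomolov's openness and Faltings' semisimplicity, which you use at a fixed prime, are then subsumed. With those two citations replacing the hyperspecial claim, your argument coincides with the paper's proof.
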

\begin{proof}
Property \eqref{property:directproduct} follows from  {\cite[Th\'eor\`eme 1 and \S 3.1]{Ser13}} (see also \cite{Ser86}), while \eqref{property:connected} is a result from \cite{Ser81}.
Property \eqref{property:smoothandreductive} follows from \cite[Th\'eor\`eme 1 and \S 2.1]{Win} while the proof of \eqref{property:index} is detailed in \cite[Theorem 1.2 (a), Remark 1.3, and \S 6]{Zyw} and is based on, amongst others, results in \cite{Win} and \cite{Ser86}.
\end{proof}

\begin{remark}\label{rem-gal-rap}
Our proof does not use any properties of abelian varieties over number fields other than that Proposition \ref{properties-gal-rap} holds for the associated $p$-adic Galois representations: we can start with any system of continuous Galois representations $\rho_p: \mathrm{Gal}(\Qbar/K) \to \GL_{g(p)}(\Z_p)$ ($p$ prime, $g(p) \in \mathbb{N} = \{1,2,\hdots\}$) and define $K_n$ as the fixed field of $\bigcap_{i=1}^{r}{\ker (\pi_{p_i,k_i} \circ \rho_{p_i})}$ where $n = \prod_{i=1}^{r}{p_i^{k_i}}$ is the prime factorization of $n \in \mathbb{N}$ and $\pi_{p_i,k_i}: \GL_{g(p_i)}(\Z_{p_i}) \to \GL_{g(p_i)}(\Z/p_i^{k_i}\Z)$ is the canonical projection ($i = 1,\hdots,r$). Then, if the analogue of Proposition \ref{properties-gal-rap} holds in this situation, our proof will go through and show that any Galois subextension of $\bigcup_{n=1}^{\infty}{K_n}$ whose Galois group has finite exponent is contained in the maximal abelian extension of some number field.
\end{remark}

We will need the following fact, which is certainly well-known to experts. We give a proof which is heavily modeled after two answers to a question on MathOverflow (see \cite{MO, MO2}) and uses, for instance, classical results in \cite{BT, Lan, Steinberg}.

\begin{proposition}\label{prop-MO} Let $q$ be a power of a prime $p$ and let $G$ be a smooth, connected, reductive algebraic group over the finite field $\mathbb{F}_q$. Let $G(\mathbb{F}_q)^{+}$ be the normal subgroup of $G(\mathbb{F}_q)$ generated by its $p$-Sylow subgroups. Then the quotient $G(\mathbb{F}_{q})/G(\mathbb{F}_{q})^{+}$ is abelian.
\end{proposition}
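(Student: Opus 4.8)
The plan is to reduce the statement to the case of a semisimple, simply connected group, where it follows from a classical generation theorem of Steinberg, and then to transport the conclusion back along the natural central isogeny. Write $G_{\mathrm{der}}$ for the derived group of $G$, let $\widetilde{G}$ be the simply connected cover of the semisimple group $G_{\mathrm{der}}$, and let $\phi\colon\widetilde{G}\to G_{\mathrm{der}}\hookrightarrow G$ be the induced homomorphism of algebraic groups over $\mathbb{F}_q$; its kernel is a finite central subgroup scheme of $\widetilde{G}$, and both $G$ and $\widetilde{G}$ are quasi-split over $\mathbb{F}_q$ by Lang's theorem.

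First I would record two facts about $G(\mathbb{F}_q)^{+}$. Since $\mathbb{F}_q$ has characteristic $p$, every unipotent element of $G(\mathbb{F}_q)$ has $p$-power order, hence lies in a $p$-Sylow subgroup of $G(\mathbb{F}_q)$ and therefore in $G(\mathbb{F}_q)^{+}$; thus $G(\mathbb{F}_q)^{+}$ contains all unipotent elements of $G(\mathbb{F}_q)$. Since $\phi$ sends unipotent elements to unipotent elements and maps $\widetilde{G}(\mathbb{F}_q)$ into $G(\mathbb{F}_q)$, it follows that $\phi\big(\widetilde{G}(\mathbb{F}_q)\big)\subseteq G(\mathbb{F}_q)^{+}$ as soon as $\widetilde{G}(\mathbb{F}_q)$ is generated by its unipotent elements. (Using quasi-splitness of $G$ one in fact gets equality $G(\mathbb{F}_q)^{+}=\langle\text{unipotent elements of }G(\mathbb{F}_q)\rangle$, but only the inclusion above will be used.)

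The one substantial input is Steinberg's generation theorem: since $\widetilde{G}$ is semisimple, simply connected, and quasi-split over $\mathbb{F}_q$, the group $\widetilde{G}(\mathbb{F}_q)$ is generated by the $\mathbb{F}_q$-points of its root subgroups, and in particular by its unipotent elements, so that $\widetilde{G}(\mathbb{F}_q)=\widetilde{G}(\mathbb{F}_q)^{+}$ and hence $\phi\big(\widetilde{G}(\mathbb{F}_q)\big)\subseteq G(\mathbb{F}_q)^{+}$ by the previous paragraph. It then remains to show that $[G(\mathbb{F}_q),G(\mathbb{F}_q)]\subseteq\phi\big(\widetilde{G}(\mathbb{F}_q)\big)$, and for this I would use that the commutator map lifts to $\widetilde{G}$. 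The commutator $[g,h]=ghg^{-1}h^{-1}$ is unchanged if $g$ or $h$ is multiplied by a central element, so the commutator morphism $G\times G\to G$ factors through $G^{\mathrm{ad}}\times G^{\mathrm{ad}}$, where $G^{\mathrm{ad}}=G/Z(G)$; and since $G^{\mathrm{ad}}$ is also the adjoint quotient $\widetilde{G}/Z(\widetilde{G})$, forming the commutator of $($fppf$)$-local lifts to $\widetilde{G}$ — which is well defined because the ambiguity is central — yields a morphism $\widetilde{c}\colon G^{\mathrm{ad}}\times G^{\mathrm{ad}}\to\widetilde{G}$ over $\mathbb{F}_q$ whose composition with $\phi$ is the factored commutator map. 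Thus, for $g,h\in G(\mathbb{F}_q)$ with images $\bar g,\bar h\in G^{\mathrm{ad}}(\mathbb{F}_q)$, we get $[g,h]=\phi\big(\widetilde{c}(\bar g,\bar h)\big)\in\phi\big(\widetilde{G}(\mathbb{F}_q)\big)$. Combining the three steps, $[G(\mathbb{F}_q),G(\mathbb{F}_q)]\subseteq\phi\big(\widetilde{G}(\mathbb{F}_q)\big)\subseteq G(\mathbb{F}_q)^{+}$, which (as $G(\mathbb{F}_q)^{+}$ is normal) is exactly the assertion that $G(\mathbb{F}_q)/G(\mathbb{F}_q)^{+}$ is abelian.

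I expect the main obstacle to be the precise invocation of Steinberg's theorem for $\widetilde{G}$, with some care because $\widetilde{G}$ need not be split and may involve twisted almost-simple factors. The well-known small exceptions in this circle of ideas (for instance $A_1(2)$ and $A_1(3)$) concern the perfectness of $\widetilde{G}(\mathbb{F}_q)$, not its generation by unipotent elements, so no case-by-case analysis of small groups should be needed; alternatively, one can appeal directly to the structural results of Borel and Tits on $G(k)^{+}$. Everything else — the two facts about $G(\mathbb{F}_q)^{+}$ and the lifting of the commutator morphism to the simply connected cover — is essentially formal.
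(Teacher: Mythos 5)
Your argument is correct, and its skeleton is the same as the paper's: pass to the simply connected cover $\widetilde{G}'$ of the derived group, use Steinberg's generation theorem (\cite[Theorem 12.4]{Steinberg}, applied over $\bar{\mathbb{F}}_q$ with $\sigma$ the Frobenius) for that cover, and observe that the commutator morphism of $G$ factors through $(G/Z(G))\times(G/Z(G))\simeq(\widetilde{G}'/Z(\widetilde{G}'))\times(\widetilde{G}'/Z(\widetilde{G}'))$ and lifts through $\widetilde{G}'$, so that $[G(\mathbb{F}_q),G(\mathbb{F}_q)]\subseteq\omega(\widetilde{G}'(\mathbb{F}_q))$. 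Where you genuinely diverge is in how the image of $\widetilde{G}'(\mathbb{F}_q)$ is placed inside $G(\mathbb{F}_q)^{+}$: the paper invokes Borel--Tits \cite[Corollaire 3.7]{BT} to identify $H(\mathbb{F}_q)^{+}$ with the group generated by rational points of unipotent radicals of Borel subgroups (using Lang's theorem to know minimal parabolics are Borels), and then shows via the analysis of Borel preimages under the universal cover that $\widetilde{G}'(\mathbb{F}_q)^{+}\to G'(\mathbb{F}_q)^{+}$ is surjective and $G(\mathbb{F}_q)^{+}=G'(\mathbb{F}_q)^{+}$; you instead only need the elementary one-way inclusion that unipotent elements of $G(\mathbb{F}_q)$ have $p$-power order, hence lie in a $p$-Sylow subgroup and therefore in $G(\mathbb{F}_q)^{+}$, combined with the fact that $\phi$ preserves unipotence (or simply that group homomorphisms preserve $p$-power order). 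This makes your middle step shorter and avoids \cite{BT}, the Iversen/Milne discussion of Borels and unipotent radicals under the isogeny, and the reduction $G(\mathbb{F}_q)^{+}=G'(\mathbb{F}_q)^{+}$ altogether; the paper's extra work establishes stronger statements (equality of the relevant ``$+$''-subgroups) that are simply not needed for abelianness of the quotient. Your caveats are also well placed: the only substantive input is Steinberg's generation theorem, which holds for all (possibly twisted) simply connected quasi-split groups over $\mathbb{F}_q$ with no small-field exceptions, those exceptions being relevant only to perfectness.
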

\begin{proof}
Let $H$ be an arbitrary smooth, connected, reductive algebraic group over $\mathbb{F}_q$. We define $H(\mathbb{F}_q)^+$ analogously to $G(\mathbb{F}_q)^+$. By \cite[Theorem 14.5 and Proposition 14.14]{Milne}, a subgroup $\Gamma$ of $H(\mathbb{F}_q)$ is equal to the $\mathbb{F}_q$-points of a unipotent algebraic subgroup $U$ of $H$ if and only if the order of $\Gamma$ is a power of $p$. By \cite[Corollaire 3.7]{BT}, $H(\mathbb{F}_q)^{+}$ is then the subgroup of $H(\mathbb{F}_{q})$ generated by the $U(\mathbb{F}_q)$'s, where $U$ varies through the unipotent radicals of the minimal parabolic subgroups of $H$ that are defined over $\mathbb{F}_{q}$. Note that the minimal parabolic subgroups of $H$ are precisely the Borel subgroups of $H$ by \cite[Aside 17.73 and Proposition 17.99]{Milne}, where the latter proposition is a consequence of the main result in \cite{Lan}. In particular, $H$ has a Borel subgroup. Note that $k$ should be assumed to be finite in \cite[Proposition 17.99]{Milne} and the morphism $\sigma$ that occurs in the proof of \cite[Proposition 17.99]{Milne} is the Frobenius endomorphism relative to $k$.

Let $G'$ denote the derived subgroup of $G$, which is a semisimple algebraic group over $\mathbb{F}_q$, and let $\omega: \widetilde{G}' \to G'$ denote the universal cover of $G'$ (as an algebraic group; for the existence, see \cite[Remark 18.27]{Milne}). Then $\widetilde{G}'$ is also semisimple by \cite[Lemma 19.14]{Milne}. Since $G/G'$ is a torus by \cite[Proposition 12.46 and Corollary 21.50]{Milne}, the preceding paragraph together with \cite[Corollary 14.18]{Milne} implies that $G(\mathbb{F}_q)^{+} = G'(\mathbb{F}_q)^+$. By \cite[Lemma 2.5]{Iversen}, the preimage of a Borel subgroup $B$ of $G'$ under $\omega$ is a Borel subgroup $\widetilde{B}$ of $\widetilde{G}'$ (Iversen works over an algebraically closed field, but we can use \cite[17.66]{Milne} to descend to $\mathbb{F}_q$). Let $\widetilde{U}$ denote the unipotent radical of $\widetilde{B}$ and let $U$ denote the unipotent radical of $B$. We have that $\widetilde{U} \subseteq \omega^{-1}(U)$ since $B/U$ is of multiplicative type by \cite[Theorem 16.33]{Milne}. Thus, $\omega^{-1}(U)/\widetilde{U} \subseteq \widetilde{B}/\widetilde{U}$ is of multiplicative type while $\omega^{-1}(U)/\ker \omega \simeq U$ is unipotent. By \cite[Lemma 16.44]{Milne}, the multiplication morphism $(\ker \omega) \times \widetilde{U} \to \omega^{-1}(U)$ is an isomorphism. It follows that $\omega|_{\widetilde{U}}: \widetilde{U} \to U$ is an isomorphism and so $U(\mathbb{F}_q) = \omega(\widetilde{U}(\mathbb{F}_q))$. This implies that the map $\widetilde{G}'(\mathbb{F}_q)^+ \to G'(\mathbb{F}_q)^+$ is surjective. Thus, it suffices to show that $G(\mathbb{F}_q)/\omega(\widetilde{G}'(\mathbb{F}_q)^+)$ is abelian.

For an algebraic group $H$ over $\mathbb{F}_q$, we denote its center by $Z(H)$. The commutator morphism $G \times G \to G$ factors through the projection $G\times G \to (G/Z(G)) \times (G/Z(G))$. At the same time, we have a chain of canonical isomorphisms $\widetilde{G}'/Z(\widetilde{G}') \simeq G'/Z(G') \simeq G/Z(G)$, the first of which is induced by $\omega$, by \cite[Example 19.25 and Remark 19.30]{Milne} (the universal covering of $G'^{\mathrm{ad}} := G'/Z(G')$ is the composition of the universal covering of $G'$ with the projection $G' \to G'^{\mathrm{ad}}$ because of \cite[Proposition 18.2]{Milne}). This implies that the commutator morphism $G \times G \to G$ decomposes as the induced morphism $G \times G \to (\widetilde{G}'/Z(\widetilde{G}')) \times (\widetilde{G}'/Z(\widetilde{G}'))$ followed by first the morphism $(\widetilde{G}'/Z(\widetilde{G}')) \times (\widetilde{G}'/Z(\widetilde{G}')) \to \widetilde{G}'$ that is induced by the commutator morphism of $\widetilde{G}'$ and then $\omega$. It follows that $\omega(\widetilde{G}'(\mathbb{F}_q))$ contains the commutator subgroup of $G(\mathbb{F}_q)$.

Combining the two preceding paragraphs shows that it suffices to prove that $\widetilde{G}'(\mathbb{F}_q)^+ = \widetilde{G}'(\mathbb{F}_q)$. 

By \cite[Remark 18.27]{Milne},  $\widetilde{G}'$ is still simply connected when base changed to an algebraic closure $\bar{\mathbb{F}}_q$ of $\mathbb{F}_q$ and, by \cite[Proposition 23.59]{Milne}, over $\bar{\mathbb{F}}_q$, the definitions of simply connectedness in \cite[Definition 18.5]{Milne} and \cite[\S 6.4]{Steinberg} are equivalent. We can then apply \cite[Theorem 12.4]{Steinberg} to $\widetilde{G}'$ over $\bar{\mathbb{F}}_q$ with $\sigma$ chosen to be the Frobenius endomorphism relative to $\mathbb{F}_q$ to conclude.
\end{proof}

\section{Proof of Theorem \ref{mainav}}\label{Sec-proof-main}

\subsection{Some reduction steps}\label{Subsec-reduction-steps}

Let $F/k$ be a (possibly infinite) Galois extension with $F\subseteq k(A_{\mathrm{tors}})$ and $\Gal(F/k)$ having finite exponent $\exp(\Gal(F/k))\leq e$ for some positive integer $e\geq 1$. As $F$ equals the compositum of all its subextensions which are finite and Galois over $k$, it is enough to prove that Theorem \ref{mainav} holds true when $[F:k]$ is finite. 

So, let $F/k$ be finite and Galois with $\Gal(F/k)$ of exponent at most $e$ and suppose $F\subseteq \kAtor$.

Let $K/k$  be the finite extension from Proposition \ref{properties-gal-rap} and let $L=KF \subseteq \KAtor$ be the compositum of $F$ and $K$. Note that
\[\Gal(L/K)\simeq \Gal(F/F\cap K)\leq \Gal(F/k),\] hence $\exp(\Gal(L/K))\leq e$.

We are going to prove that  $L$ is contained in the maximal abelian extension of a fixed number field containing $K$ and depending only on $A$, $K$, and $e$. This of course will prove Theorem \ref{mainav} for the extension $F/k$.

As before, we denote by $K_n=K(A[n])$ the smallest subfield of $\Qbar$ containing $K$ over which every point of $A[n]$, the group of all torsion points of $A$ of order dividing $n$, is defined. We set $g = \dim A$ and $G_n=\Gal(K_n/K)$, which we identify with a subgroup of $\GL_{2g}(\Z/n\Z)$, having fixed once and for all a compatible system of isomorphisms $A[n] \simeq (\Z/n\Z)^{2g}$. 

We fix an integer $n$ such that $L \subseteq K_n$  and we set $H_n=\Gal(K_n/L)\leq G_n$.
Our goal is to prove that there exists a positive integer $m = m(A,K,e)$ and subgroups $W'_n$ and $W''_n$ of $G_n$ such that:   
\begin{enumerate}[(a)]
\item\label{cond-quot-ab} $W''_n\unlhd G_{n}$ and $G_{n}/W''_n$ is abelian,
\item\label{cond-proj-m} $W'_n$ contains the kernel of the projection from $G_{n}$ onto $G_{\gcd(n,m)}$, and
\item\label{cond-inters} $W'_n \cap W''_n\subseteq H_n$.
\end{enumerate}
From these properties it follows that 
 \[ L = K_n^{H_n} \subseteq K_n^{W'_n \cap W''_n} = K_n^{W'_n}K_n^{W''_n}\]
where we use the usual Galois-theoretic notation for the fixed field of a subgroup of the Galois group. We conclude that
\[ L \subseteq K_mK^{\mathrm{\mathrm{ab}}} \subseteq K_m^{\mathrm{\mathrm{ab}}}\]
and the statement of Theorem \ref{mainav} holds with $M=K_m$.

\subsection{Construction of the auxiliary subgroups}
We let $c=c(A,K)$ be the constant from Proposition \ref{properties-gal-rap} and set \[\delta = e!c!.\]
In the following, $\mathbb{I}_{2g}$ will denote the $2g \times 2g$ identity matrix. By abuse of notation, we use the same symbol regardless of the ring of coefficients of the matrix.

We write $n = n'n''$ where $n'$ and $n''$ are the unique coprime positive integers satisfying that $p \mid n' $ if and only if $p\mid \gcd(n,\delta)$ for every prime $p$.

Writing $n'=\prod_{i=1}^r p_i^{a_i}$ and $n''=\prod_{j=1}^{s}{q_j^{b_j}}$ with $p_i$ and $q_j$ distinct primes and $a_i$ and $b_j$ positive integers ($i=1,\hdots,r$, $j=1,\hdots,s$), we can apply Proposition \ref{properties-gal-rap}.\eqref{property:directproduct} to identify 
\begin{equation}\label{dec-Gn}G_n=G_{n'}\times G_{n''} = \left(\prod_{i=1}^{r}{G_{p_i^{a_i}}}\right)\times \left(\prod_{j=1}^{s}{G_{q_j^{b_j}}}\right).\end{equation}

Consider, under this identification, the subgroups of $G_n$ given by  \[U'_n=H_n\cap\left(G_{n'} \times \{(\mathbb{I}_{2g},\hdots,\mathbb{I}_{2g})\}\right)\] and \[U''_n=H_n\cap\left(\{(\mathbb{I}_{2g},\hdots,\mathbb{I}_{2g})\} \times G_{n''}\right)\]
 and let $H'_n$ and $H''_n$ be their isomorphic images in $G_{n'}$ and $G_{n''}$ respectively.
Let finally  \[W'_n= H'_n\times G_{n''}\] and \[W''_n=G_{n'}\times H''_n.\]

We are going to prove that these groups satisfy properties \eqref{cond-quot-ab}, \eqref{cond-proj-m}, and \eqref{cond-inters} from above. This will conclude the proof of Theorem \ref{mainav}.

Note that, under the identification \eqref{dec-Gn}, one has that  the element
\[\gamma=(\gamma'_1,\ldots,\gamma'_r,\gamma''_1,\ldots,\gamma''_s)\]
of $G_n$ belongs to $W'_n\cap W''_n$ if and only if the two elements \[\gamma'=(\gamma'_1,\ldots,\gamma'_r,\mathbb{I}_{2g},\ldots,\mathbb{I}_{2g})\] and \[\gamma''=(\mathbb{I}_{2g},\ldots,\mathbb{I}_{2g},\gamma''_1,\ldots,\gamma''_s)\] are both in $H_n$. As clearly $\gamma=\gamma'\gamma''$, we get that $W'_n\cap W''_n\subseteq H_n$ and so \eqref{cond-inters}  holds.

\begin{claim}\label{lem:largeprimes} 
The group $W''_n$ satisfies condition \eqref{cond-quot-ab}.
\end{claim}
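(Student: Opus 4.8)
The plan is to reduce condition \eqref{cond-quot-ab} to the single assertion that $G_{n''}/H''_n$ is abelian, and then to establish that assertion one prime of $n''$ at a time by means of Proposition \ref{prop-MO}. For the reduction: $H_n\unlhd G_n$ since $L/K$ is Galois, and $\{(\mathbb{I}_{2g},\hdots,\mathbb{I}_{2g})\}\times G_{n''}\unlhd G_n$, so $U''_n$, being the intersection of these two subgroups, is normal in $\{(\mathbb{I}_{2g},\hdots,\mathbb{I}_{2g})\}\times G_{n''}$, and hence its isomorphic image $H''_n$ is normal in $G_{n''}$. Therefore $W''_n=G_{n'}\times H''_n$ is normal in $G_n=G_{n'}\times G_{n''}$ and $G_n/W''_n\simeq G_{n''}/H''_n$, so \eqref{cond-quot-ab} holds if and only if $G_{n''}/H''_n$ is abelian.

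First I would note that $G_{n''}/H''_n$ is a quotient of $G_{n''}$ of exponent at most $e$. Under \eqref{dec-Gn}, the subgroup $\{(\mathbb{I}_{2g},\hdots,\mathbb{I}_{2g})\}\times G_{n''}$ of $G_n$ is $\Gal(K_n/K_{n'})$, restriction to $K_{n''}$ identifies it with $G_{n''}$, and $H''_n$ is the image of $U''_n=\Gal(K_n/LK_{n'})$. For $\sigma\in\Gal(K_n/K_{n'})$, the restriction $\sigma|_L$ lies in $\Gal(L/L\cap K_{n'})\leq\Gal(L/K)$, a group of exponent at most $e$; hence $\sigma^e$ fixes $L$ and $K_{n'}$, so $\sigma^e\in\Gal(K_n/LK_{n'})=U''_n$, which gives the exponent bound. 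By the choice $\delta=e!c!$, every prime $q$ dividing $n''$ satisfies $q\nmid\delta$, hence $q>\max\{c,e\}$. Since $G_{n''}=\prod_{j=1}^{s}{G_{q_j^{b_j}}}$ is generated by its pairwise commuting direct factors, $G_{n''}/H''_n$ is generated by the pairwise commuting images of the $G_{q_j^{b_j}}$, each of which is a quotient of $G_{q_j^{b_j}}$ of exponent at most $e$; so it suffices to show: for a prime $q>\max\{c,e\}$ and an integer $b\geq1$, every quotient $Q$ of $G_{q^b}$ with $\exp(Q)\leq e$ is abelian.

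To prove this I would pass from $G_{q^b}$ to $G_q=\Gal(K_q/K)$. The kernel of the restriction $G_{q^b}\to G_q$ is a $q$-group, being contained in the kernel of $\mathcal{G}_q(\Z/q^b\Z)\to\mathcal{G}_q(\mathbb{F}_q)$, which is a $q$-group because $\mathcal{G}_q$ is smooth over $\Z_q$ (here $q>c$, by Proposition \ref{properties-gal-rap}.\eqref{property:smoothandreductive}); since $\exp(Q)\leq e<q$, this kernel dies in $Q$, so $Q$ is a quotient of $G_q$. Next, $G_q$ is a subgroup of $\mathcal{G}_q(\mathbb{F}_q)$ of index at most $c<q$, by Proposition \ref{properties-gal-rap}.\eqref{property:index} together with the surjectivity of $\mathcal{G}_q(\Z_q)\to\mathcal{G}_q(\mathbb{F}_q)$ (again by smoothness). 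Since $\mathcal{G}_q\otimes_{\Z_q}\mathbb{F}_q$ is smooth, connected and reductive (Proposition \ref{properties-gal-rap}.\eqref{property:connected} and \eqref{property:smoothandreductive}), Proposition \ref{prop-MO} gives that $\mathcal{G}_q(\mathbb{F}_q)/\mathcal{G}_q(\mathbb{F}_q)^{+}$ is abelian. Now $\mathcal{G}_q(\mathbb{F}_q)^{+}$, being generated by its Sylow $q$-subgroups, has no proper subgroup of index $<q$ — such a subgroup would give a nontrivial homomorphism to a symmetric group $\mathrm{Sym}(m)$ with $m<q$, which must kill every Sylow $q$-subgroup and hence the whole group — so, as $G_q\cap\mathcal{G}_q(\mathbb{F}_q)^{+}$ has index at most $c<q$ in $\mathcal{G}_q(\mathbb{F}_q)^{+}$, we conclude $\mathcal{G}_q(\mathbb{F}_q)^{+}\subseteq G_q$. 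Moreover $[\mathcal{G}_q(\mathbb{F}_q):G_q]$ is prime to $q$, so the Sylow $q$-subgroups of $G_q$ are those of $\mathcal{G}_q(\mathbb{F}_q)$, whence $\mathcal{G}_q(\mathbb{F}_q)^{+}$ is also the normal subgroup of $G_q$ generated by its Sylow $q$-subgroups. Finally $Q$ has exponent $<q$, so the images in $Q$ of the Sylow $q$-subgroups of $G_q$ are trivial; hence $\mathcal{G}_q(\mathbb{F}_q)^{+}$ dies in $Q$, and $Q$ is a quotient of $G_q/\mathcal{G}_q(\mathbb{F}_q)^{+}$, which embeds in the abelian group $\mathcal{G}_q(\mathbb{F}_q)/\mathcal{G}_q(\mathbb{F}_q)^{+}$. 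This settles the per-prime claim, and with it \eqref{cond-quot-ab}.

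Given Propositions \ref{properties-gal-rap} and \ref{prop-MO}, the only step that is not pure bookkeeping is the inclusion $\mathcal{G}_q(\mathbb{F}_q)^{+}\subseteq G_q$ for the relevant (large) primes $q$: this is where the index bound of Proposition \ref{properties-gal-rap}.\eqref{property:index} is combined with the elementary fact that a finite group generated by its Sylow $p$-subgroups has no proper subgroup of index less than $p$, and it is the ``large prime'' counterpart of the $p$-adic exponential argument used for the primes $\leq\max\{c,e\}$ (cf. the sketch in the introduction). When writing out the details I would also check that the hypotheses actually used — smoothness, connectedness and reductivity of $\mathcal{G}_q$ over $\mathbb{F}_q$, and the $q$-group property of the congruence kernel $\ker(\mathcal{G}_q(\Z/q^b\Z)\to\mathcal{G}_q(\mathbb{F}_q))$ — genuinely hold for every prime $q\mid n''$; this is guaranteed because $q>c$, while the factor $e!$ in $\delta$ is exactly what forces $q>e$, so that a quotient of exponent at most $e$ has order prime to $q$.
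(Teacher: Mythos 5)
Your proof is correct, and it follows the paper's overall strategy — the same reduction of condition \eqref{cond-quot-ab} to showing that each quotient of $G_{q^b}$ of exponent at most $e$ is abelian for the primes $q \nmid \delta$, with Proposition \ref{prop-MO} as the key input — but it executes the crucial step differently. The paper stays at level $q^b$: it lifts the $q$-power-order generators of $\mathcal{G}_q(\mathbb{F}_q)^{+}$ to $q$-power-order elements $\gamma$ of $\mathcal{G}_q(\Z/q^b\Z)$, uses the index bound of Proposition \ref{properties-gal-rap}.\eqref{property:index} to get $\gamma^{c!} \in G_{q^b}$, then the exponent bound and $\gcd(q,\delta)=1$ to get $\gamma \in H_{q^b}$, and concludes that the full preimage of $\mathcal{G}_q(\mathbb{F}_q)^{+}$ lies in $H_{q^b}$. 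You instead first kill the congruence kernel (a $q$-group) to pass to the mod-$q$ level, and then convert the index bound into the containment $\mathcal{G}_q(\mathbb{F}_q)^{+} \subseteq G_q$ via the elementary observation that a finite group generated by its Sylow $q$-subgroups has no proper subgroup of index less than $q$; the exponent hypothesis then kills $\mathcal{G}_q(\mathbb{F}_q)^{+}$ in the quotient. Your route yields a slightly cleaner structural statement (the mod-$q$ Galois image contains all of $\mathcal{G}_q(\mathbb{F}_q)^{+}$) and avoids the Hensel-type lifting of $q$-power-order elements, at the price of the small-index lemma; both arguments consume exactly the same inputs, namely Proposition \ref{properties-gal-rap}.\eqref{property:smoothandreductive}, \eqref{property:index} and Proposition \ref{prop-MO}. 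Two cosmetic points to fix in a final write-up: since ``exponent at most $e$'' does not mean all orders divide $e$, raise $\sigma$ to the order of $\sigma|_L$ (which is at most $e$) rather than to $e$ itself — the conclusion that $G_{n''}/H''_n$ has exponent at most $e$ is unaffected, and the paper obtains it even more directly from the injection into $G_n/H_n$; and the connectedness of $\mathcal{G}_q \otimes_{\Z_q} \mathbb{F}_q$ needed for Proposition \ref{prop-MO} should be attributed to the reductivity in Proposition \ref{properties-gal-rap}.\eqref{property:smoothandreductive} (reductive in the usual sense being connected) rather than to \eqref{property:connected}, which concerns $\mathcal{G}_q$ over $\Z_q$.
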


\begin{proof}[Proof of Claim \ref{lem:largeprimes}]
Since $L/K$ is Galois, the subgroup $H_n$ is normal in $G_n$, so $H''_n$ is normal in $G_{n''}$ and hence $W''_{n}$ is normal in $G_n$. Note that
\[G_n/W''_n\simeq G_{n''}/H''_n\simeq \left( \{(\mathbb{I}_{2g},\hdots,\mathbb{I}_{2g})\}\times G_{n''}\right)/U''_n\]
and we are going to prove that the last quotient is abelian.

 For $j = 1, \hdots, s$, set
\[U_{q_j^{b_j}} = U''_n \cap (\{(\mathbb{I}_{2g},\hdots,\mathbb{I}_{2g})\} \times G_{q_j^{b_j}} \times \{(\mathbb{I}_{2g},\hdots,\mathbb{I}_{2g})\}), \]
where of course the only non-trivial factor occurs at the $(r+j)$-th position, and let $H_{q_j^{b_j}}$ denote its isomorphic image in $G_{q_j^{b_j}}$. Using the identification \eqref{dec-Gn}, we have that
\[ \{(\mathbb{I}_{2g},\hdots,\mathbb{I}_{2g})\} \times \prod_{j=1}^{s}{H_{q_j^{b_j}}} \subseteq U''_n.\]
It follows that $ \left( \{(\mathbb{I}_{2g},\hdots,\mathbb{I}_{2g})\}\times G_{n''}\right)/U''_n$ is isomorphic to a quotient of
\[(\{(\mathbb{I}_{2g},\hdots,\mathbb{I}_{2g})\}\times \prod_{j=1}^{s}{G_{q_j^{b_j}}})/(\{(\mathbb{I}_{2g},\hdots,\mathbb{I}_{2g})\} \times \prod_{j=1}^{s}{H_{q_j^{b_j}}}),\]
which, in turn, is isomorphic to the product
\[\prod_{j=1}^{s}{(G_{q_j^{b_j}}/H_{q_j^{b_j}})}.\]
Hence it suffices to show that each factor $G_{q_j^{b_j}}/H_{q_j^{b_j}}$ is abelian for $j = 1,\hdots,s$. Note that this group has finite exponent at most $e$ since it admits an injective homomorphism into the group $G_n/H_n$ of finite exponent at most $e$.

From now on, we fix $j$ and we write $q,b$ instead of $q_j,b_j$ to ease the reading.

Let $\mathcal{G}_q$ be the algebraic group over $\Z_q$ defined in Section \ref{prelim} and
consider the homomorphism \[\varphi: \mathcal{G}_q(\Z/q^b\Z) \to \mathcal{G}_q(\Z/q\Z)\] given by reduction modulo $q$. 
We have that $\varphi$ is surjective by Proposition \ref{properties-gal-rap}.\eqref{property:smoothandreductive} and Hensel's lemma. 
Furthermore, the order of the kernel of $\varphi$ is a power of $q$.

Let $\mathcal{G}_q(\Z/q\Z)^+$ denote the normal subgroup of $\mathcal{G}_q(\Z/q\Z)$ that is generated by its $q$-Sylow subgroups. 

Note that $\mathcal{G}_q(\Z/q\Z)^+$ is generated by elements $g$ whose order is a power of $q$ and any lift $\gamma \in \mathcal{G}_q(\Z/q^b\Z)$ of such an element $g$ will have order equal to a power of $q$ as well. Proposition \ref{properties-gal-rap}.\eqref{property:index} together with Hensel's lemma implies that $\gamma^{c!}\in G_{q^b}$ for any such $\gamma$. 

Also, as $\exp(G_{q^b}/H_{q^b})\leq e$, $\gamma^{\delta}\in H_{q^b}$ and finally, as $\gamma$ has order equal to a power of $q$ and $q\nmid \delta$, we have that $\gamma\in H_{q^b}$. It follows that $\ker \varphi \subseteq H_{q^b}$ and $\mathcal{G}_q(\Z/q\Z)^+ \subseteq \varphi(H_{q^b})$. Hence, we have that $\varphi^{-1}(\mathcal{G}_q(\Z/q\Z)^+) \subseteq H_{q^b}$.

Thus $G_{q^b}/H_{q^b}$ is a quotient of the group $G_{q^b}/\varphi^{-1}(\mathcal{G}_q(\Z/q\Z)^+)$, which admits an injective homomorphism into the group \[\mathcal{G}_q(\Z/q\Z)/\mathcal{G}_q(\Z/q\Z)^+.\] This last group is abelian by Proposition \ref{prop-MO} and so is therefore $G_{q^b}/H_{q^b}$.
\end{proof}

\begin{claim}\label{lem:smallprimes}
The group $W'_{n}$ satisfies condition \eqref{cond-proj-m}.

\end{claim}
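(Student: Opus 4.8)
The plan is to take $m = \prod_{p \mid \delta} p^{N_p}$, where $N_p$ is an integer depending only on $A$, $K$, and $e$ to be chosen below; since $p \mid \delta = e!c!$ forces $p \le \max\{c,e\}$, only finitely many primes occur and $m$ is a well-defined positive integer. Under the identification~\eqref{dec-Gn} a prime $p$ divides $\gcd(n,m)$ exactly when $p \mid n'$, and a direct computation (using Proposition~\ref{properties-gal-rap}.\eqref{property:directproduct} once more) shows that the kernel of the projection $G_n \to G_{\gcd(n,m)}$ equals $\big(\prod_{i=1}^{r} \ker(G_{p_i^{a_i}} \to G_{p_i^{\min(a_i,N_{p_i})}})\big) \times G_{n''}$. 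As the $G_{n''}$-factor of this group already matches that of $W'_n = H'_n \times G_{n''}$, verifying \eqref{cond-proj-m} reduces to showing $\prod_{i=1}^{r} \ker(G_{p_i^{a_i}} \to G_{p_i^{\min(a_i,N_{p_i})}}) \subseteq H'_n$. Now $\gamma' \in H'_n$ holds precisely when $(\gamma',\mathbb{I}_{2g},\dots,\mathbb{I}_{2g}) \in H_n$, and since $G_n/H_n$ has exponent dividing $e$, which divides $\delta$, every $\delta$-th power in $G_n$ lies in $H_n$. Hence I would reduce the claim to the local assertion that, \emph{for each $i$ and for a suitable choice of $N_{p_i}$, every element of $\ker(G_{p_i^{a_i}} \to G_{p_i^{\min(a_i,N_{p_i})}})$ is a $\delta$-th power in $G_{p_i^{a_i}}$}: indeed, writing each coordinate $\gamma'_i$ as $\beta_i^\delta$ exhibits $\gamma'$ as $\beta^\delta$ for some $\beta \in G_{n'}$, so $(\gamma',\mathbb{I}_{2g},\dots,\mathbb{I}_{2g})$ is a $\delta$-th power in $G_n$.

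To prove the local assertion I fix $p = p_i$ and abbreviate $a = a_i$, $N = N_{p_i}$, $v = v_p(\delta)$; I may assume $N < a$, since otherwise the relevant kernel is trivial. Write $\delta = p^v u$ with $p \nmid u$. Because $\ker(G_{p^a} \to G_{p^N})$ is a finite $p$-group (it lies in $\mathbb{I}_{2g} + p^N M_{2g}(\mathbb{Z}/p^a\mathbb{Z})$), the $u$-th power map is a bijection of it, so it is enough to see that each of its elements is a $p^v$-th power in $G_{p^a}$. This is where the $p$-adic exponential attached to $\mathcal{G}_p$ comes in. Write $\Gamma_N$ for the kernel of reduction $\GL_{2g}(\mathbb{Z}_p) \to \GL_{2g}(\mathbb{Z}/p^N\mathbb{Z})$. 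The key input I would establish is: there is an integer $N_0 = N_0(A,K,p)$ such that for all $N \ge N_0$ one has $G_{p^\infty} \cap \Gamma_N = \mathcal{G}_p(\mathbb{Z}_p) \cap \Gamma_N$ and, for every $j \ge 0$, the $p^j$-th power map is a bijection $G_{p^\infty} \cap \Gamma_N \to G_{p^\infty} \cap \Gamma_{N+j}$. Granting this I set $N_p = N_0(A,K,p) + v_p(\delta)$; then any $\gamma \in \ker(G_{p^a} \to G_{p^{N_p}})$ lifts (as $N_p \le a$, this kernel is the image of $G_{p^\infty} \cap \Gamma_{N_p}$ under reduction modulo $p^a$) to some $\hat\gamma \in G_{p^\infty} \cap \Gamma_{N_p}$, which by the above is $\hat\beta^{\,p^v}$ for some $\hat\beta \in G_{p^\infty} \cap \Gamma_{N_0}$; reducing modulo $p^a$ shows $\gamma$ is a $p^v$-th power in $G_{p^a}$, as needed.

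Finally I would justify the key input. The group $G_{p^\infty} = \rho_p(\Gal(\Qbar/K))$ is closed of finite index in the compact group $\mathcal{G}_p(\mathbb{Z}_p)$ by Proposition~\ref{properties-gal-rap}.\eqref{property:index}, hence open, so $G_{p^\infty} \cap \Gamma_N = \mathcal{G}_p(\mathbb{Z}_p) \cap \Gamma_N$ for $N$ large. Writing $\mathfrak{g}_p \subseteq M_{2g}(\mathbb{Q}_p)$ for the Lie algebra of the algebraic group $\mathcal{G}_p \otimes_{\mathbb{Z}_p} \mathbb{Q}_p$, standard $p$-adic analytic group theory gives that for $N$ large the congruence subgroups $\mathcal{G}_p(\mathbb{Z}_p) \cap \Gamma_N$ are uniform pro-$p$ groups and the $p$-adic exponential restricts to a bijection from the lattice $\mathfrak{g}_p \cap p^N M_{2g}(\mathbb{Z}_p)$ onto $\mathcal{G}_p(\mathbb{Z}_p) \cap \Gamma_N$, with inverse the $p$-adic logarithm. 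Since $\mathfrak{g}_p$ is a $\mathbb{Q}_p$-subspace, $\mathfrak{g}_p \cap p^{N+j} M_{2g}(\mathbb{Z}_p) = p^j(\mathfrak{g}_p \cap p^N M_{2g}(\mathbb{Z}_p))$, and combining this with $\exp(p^j x) = \exp(x)^{p^j}$ and the torsion-freeness of uniform pro-$p$ groups yields the asserted bijectivity of the $p^j$-th power map. Choosing $N_0(A,K,p)$ large enough for all these statements then produces $m = m(A,K,e)$, establishing \eqref{cond-proj-m}; together with Claim~\ref{lem:largeprimes}, the relation \eqref{cond-inters} proved above, and the reduction in Subsection~\ref{Subsec-reduction-steps}, this completes the proof of Theorem~\ref{mainav}. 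The step I expect to be the real obstacle is precisely this last one: ensuring the threshold $N_0$ can be chosen uniformly (independently of $n$, $F$, and $L$) and that the exponential genuinely intertwines multiplication by $p^j$ on $\mathfrak{g}_p$ with the $p^j$-power map on the group; the rest is bookkeeping with~\eqref{dec-Gn} and the exponent bound on $G_n/H_n$.
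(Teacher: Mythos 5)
Your argument is correct and essentially reproduces the paper's proof: both reduce prime by prime via Proposition \ref{properties-gal-rap}.\eqref{property:directproduct}, use the exponent bound (so that $\delta$-th powers of elements of $G_n$ lie in $H_n$), and show that elements deep in the $p$-congruence filtration are $\delta$-th powers by means of the $p$-adic exponential of $\mathcal{G}_p$ combined with the finite-index property \eqref{property:index}, with only finitely many primes $p\mid\delta$ mattering so that $m$ depends only on $A$, $K$, $e$. The differences are organisational rather than substantive: the paper works inside $\mathcal{G}_p(\Z_p)$, needing only that $\varphi(\delta\mathcal{U})$ is an open neighbourhood of the identity and absorbing the index via $\gamma^{c!}\in G_{p^\infty}$, whereas you split $\delta=p^{v}u$ and work with the Galois image itself, invoking the slightly stronger (but standard, and correct) fact that $\exp$ maps the Lie lattice $\mathfrak{g}_p\cap p^N M_{2g}(\Z_p)$ bijectively onto the full congruence subgroup for all large $N$ --- the ``key input'' you flag as the potential obstacle does hold, with the threshold depending only on $A$, $K$, $p$, and is exactly the Bourbaki-type statement the paper cites.
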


\begin{proof}[Proof of Claim \ref{lem:smallprimes}]
For $i = 1,\hdots,r$, set
\[U_{p_i^{a_i}} = U'_n \cap (\{(\mathbb{I}_{2g},\hdots,\mathbb{I}_{2g})\} \times G_{p_i^{a_i}} \times \{(\mathbb{I}_{2g},\hdots,\mathbb{I}_{2g})\}), \]
where now the only non-trivial factor occurs at the $i$-th position, and let $H_{p_i^{a_i}}$ denote its isomorphic image in $G_{p_i^{a_i}}$. 
Note that, as before, the quotient group $G_{p_i^{a_i}}/H_{p_i^{a_i}}$ has exponent at most $e$ since it admits an injective homomorphism into the group $G_n/H_n$ of exponent at most $e$.
Using the identification \eqref{dec-Gn}, we have
\[ \prod_{i=1}^{r}{H_{p_i^{a_i}}} \times G_{n''} \subseteq W'_n.\]
Thus, thanks to Proposition \ref{properties-gal-rap}.\eqref{property:directproduct}, in order to prove Claim \ref{lem:smallprimes} it suffices to show that, for each $i = 1,\hdots,r$, there exists some $t_i = t_i(A,K,e) \in \mathbb{N}$ such that $H_{p_i^{a_i}}$ contains the kernel of the projection map
\[\pi_{p_i^{t_i}}:G_{p_i^{a_i}} \rightarrow G_{p_i^{\min\{t_i,a_i\}}}.\]
 From now on, we fix $i$ and write $p, a, t$ instead of $p_i, a_i, t_i$.

Let $G_{p^\infty}=\Gal(K_{p^{\infty}}/K)\subseteq \mathcal{G}_p(\Z_p)$ be the group defined in Section \ref{prelim}.
Let \[\lambda: G_{p^\infty} \to G_{p^a}\] be the canonical surjective restriction homomorphism and set ${H}_{p^{\infty}}=\lambda^{-1}(H_{p^a})$. 

Note that it is enough to show that there exists some $t = t(A,K,e) \in \mathbb{N}$ such that ${H}_{p^{\infty}}$ contains the kernel of the projection map
\[\tilde{\pi}_{p^t}:\mathcal{G}_p(\Z_p) \to \mathcal{G}_p(\Z/p^t\Z).\]
Indeed this would imply that $\ker(\pi_{p^t})=\lambda(\ker(\tilde{\pi}_{p^t}))\subseteq H_{p^a}$.

As $H_{p^a}$ is normal in $G_{p^a}$, the subgroup ${H}_{p^{\infty}}$ is normal in $G_{p^\infty}$. Since $G_{p^\infty}/{H}_{p^{\infty}} \simeq G_{p^a}/H_{p^a}$ has finite exponent at most $e$ and \[[\mathcal{G}_p(\Z_p):G_{p^{\infty}}] = [\mathcal{G}_p(\Z_p):\rho_p(\Gal(\Qbar/K))]\leq c\] 
 by Proposition \ref{properties-gal-rap}.\eqref{property:index}, we have that, if $\gamma \in \mathcal{G}_p(\Z_p)$, then $\gamma^{\delta}$ lies in ${H}_{p^{\infty}}$.

We now want to show that $t$ can be chosen such that every element in $\ker(\tilde{\pi}_{p^t})$ is of the form $\gamma^{\delta}$ for some $\gamma\in  \mathcal{G}_p(\Z_p)$. This will complete the proof of Claim \ref{lem:smallprimes} and of Theorem \ref{mainav}.

Let $\mathbf{L}_p$ denote the Lie algebra of $\mathcal{G}_p \otimes_{\Z_p}\Q_p$.
 By \cite[Ch. III, \S 7, No. 2, Proposition 3]{Bour2},  there is an open subgroup $\mathcal{U} \subseteq \mathbf{L}_p$ and a map
 \[\varphi: \mathcal  U \to \mathcal{G}_p(\Q_p),\]
 called exponential map and satisfying $\varphi(\ell u) = \varphi(u)^{\ell}$ for all $u \in \mathcal{U}$ and $\ell \in \mathbb{Z}$ and such that $\mathcal V=\varphi(\mathcal{U})$ is an open subgroup of $\mathcal{G}_p(\Q_p)$ homeomorphic to  $\mathcal{U}$ via $\varphi$. After shrinking $\mathcal{U}$, we can and will assume without loss of generality that $\varphi(\mathcal{U}) \subseteq \mathcal{G}_p(\Z_p)$ since $\mathcal{G}_p(\Z_p)$ is an open subgroup of $\mathcal{G}_p(\Q_p)$.

Note that \[\delta \mathcal  U=\{\delta u\mid u\in \mathcal{U}\}\] is open in $\mathcal  U$ as multiplication by $\delta$ is an automorphism of $\mathbf{L}_p$ as a topological group.
Furthermore, since $\varphi$ is a homeomorphism and $\varphi(0) = \mathbb{I}_{2g}$, we have that $\varphi(\delta \mathcal  U)$ is open in $\mathcal  V$ and contains $\mathbb{I}_{2g}$.

Therefore there exists $t = t(A,K,e) \in \mathbb{N}$ such that $\varphi(\delta \mathcal  U)$ contains the open ball (with respect to the maximum norm on $\mathcal{G}_p(\Z_p) \subseteq \GL_{2g}(\Z_p) \subseteq \Q_p^{4g^2}$) centered at $\mathbb{I}_{2g}$ of radius $p^{-t+1}$. Thus, $\ker(\tilde{\pi}_{p^t}) \subseteq \varphi(\delta \mathcal  U) \subseteq \mathcal  V$.

We set $\psi = \varphi^{-1}: \mathcal{V} \to \mathcal{U}$. Then, if $v \in \ker(\tilde{\pi}_{p^t})$, we have that $\psi(v) \in \delta \mathcal  U$ and so $\psi(v) = \delta u$ for some $u \in \mathcal  U$. Hence
\[ v = \varphi(\psi(v)) = \varphi(\delta u) = \varphi(u)^{\delta} =\gamma^{\delta}\]
with $\gamma=\varphi(u)\in \mathcal{G}_p(\Z_p)$ and we are done. 
\end{proof}

\section{Theorem \ref{mainav} over other base fields}\label{rem-Hab}
In a personal communication, P. Habegger asked two questions: the first was whether Theorem \ref{mainav} holds over other base fields such as global fields of positive characteristic or local fields; the second was about the existence of fields $k$ for which Theorem \ref{mainav} does not hold.

Concerning the first question, if $k$ is a finite field, then Theorem \ref{mainav} trivially holds true with $M=k$ as any algebraic extension of $k$ is abelian. 

If $k$ is a $p$-adic field, then Theorem \ref{mainav} follows from the argument in the proof of \cite[Theorem 1.2]{C}, which we re-sketch here for clarity.
Suppose that $F/k$ is finite and Galois and $\mathrm{Gal}(F/k)$ has exponent at most $e$.
First notice that the Galois group of the tamely ramified part $F^{\mathrm{tame}}/k$ of the extension $F/k$ is metacyclic and hence has order bounded by $e^2$. It then follows from the proof of \cite[Theorem 1]{Shafarevich} (which can be adapted to the case where $k$ contains the $p$-th roots of unity) that $\mathrm{Gal}(F/F^{\mathrm{tame}})$ is a finite $p$-group generated by at most $[k:\mathbb{Q}_p]e^2+2$ elements (and of exponent at most $e$), so, by Zelmanov's solution to the restricted Burnside problem (see \cite{Zel1, Zel2} and previous work in \cite{HH56, Kos59}), it has order bounded from above by a constant depending only on $k$ and $e$.
Since $\mathbb{Q}_p$ has only finitely many finite extensions of a given degree and any Galois extension of $k$ is the compositum of its finite Galois subextensions, Theorem \ref{mainav} therefore also holds true in this case and for reasons unrelated to abelian varieties.

Motivated by our work, in \cite{GP} Gajda and Petersen  proved, among other results, that Theorem \ref{mainav} holds over any field $k$ that is finitely generated over either its prime field, an algebraically closed field, or a local field. Their approach is different and more of topological nature, based, in particular, on work of Zelmanov and Wilson.

As for the second question posed by Habegger, it is possible to construct an infinite extension of $\mathbb{Q}$ for which Theorem \ref{mainav} is false as follows: we choose an abelian surface $A$  defined over a number field $k$ with (geometric) endomorphism ring equal to $\mathbb{Z}$. As noticed in Section \ref{prelim}, after replacing $k$ with a finite extension $K/k$, we have that the $p$-adic  representations $\rho_p: \Gal(\Qbar/K) \to \GL_{4}(\Z_p)$ (defined in Section \ref{prelim}) are independent of each other (\emph{i.e.}, the image of the adelic representation is precisely the product of the images of the $p$-adic representations).
Furthermore, it is proved in \cite[Theorem 1.3]{Lom} that, for a suitably chosen system of isomorphisms $A[n] \simeq (\mathbb{Z}/n\mathbb{Z})^4$ (cf. \cite[Section 2.1]{Lom}), we have that $\mathrm{Im}(\rho_p)= \mathrm{GSp}_4(\mathbb{Z}_p)$ for all sufficiently large $p$.

We deduce from this together with Dirichlet's theorem on primes in arithmetic progressions that the set $\mathcal{P}$ of rational primes $p$ such that $p \equiv 1 \mod 5$ and $\mathrm{Im}(\rho_p) = \mathrm{GSp}_4(\mathbb{Z}_p)$ is infinite. For $p \in \mathcal{P}$, let $\zeta \in \mathbb{F}_p$ denote a primitive fifth root of unity, then the subgroup
\[ \left\langle  \begin{pmatrix} \zeta & 1 \\ 0 & \zeta^{-1} \end{pmatrix}, \begin{pmatrix} -1 & -\zeta^{-1} \\\zeta & 0 \end{pmatrix} \right\rangle \]
of $\mathrm{SL}_2(\mathbb{F}_p)$ is isomorphic to $\mathrm{SL}_2(\mathbb{F}_5)$ (see \cite{MS} and \cite[Proposition 13.7]{Passman}). Furthermore, there is an injective group homomorphism from $\mathrm{SL}_2(\mathbb{F}_p)$ to $\mathrm{GSp}_4(\mathbb{F}_p)$ that sends a matrix $M$ to \[\begin{pmatrix} M & 0 \\ 0 & M^{-T}\end{pmatrix}\] where $M^{-T}$ denotes the inverse transpose of $M$.

It follows that for all $p\in \mathcal{P}$, the group $\mathrm{Im}(\rho_p)=\mathrm{GSp}_4(\mathbb{F}_p)$ contains a non-abelian finite subgroup $G_p \simeq \mathrm{SL}_2(\mathbb{F}_5)$ of fixed order. 

For $M \in \mathrm{GSp}_4(\mathbb{Z}_p)$, we let $\pi_p(M)$ denote its reduction modulo $p$. Since the $\rho_p$ are continuous and independent of each other, the product of the groups
\[\{M  \in \mathrm{GSp}_4(\mathbb{Z}_p)\,\mid\, \pi_p(M) \in G_p\}\]
for $p$ varying in $\mathcal{P}$ is identified with a closed subgroup $\Gamma$ of $\mathrm{Gal}(K(A_{\mathrm{tors}})/K)$ via the adelic representation. By Galois theory, there exists an (infinite) extension $K'/K$ contained in $K(A_{\mathrm{tors}})$ such that $\mathrm{Gal}(K(A_{\mathrm{tors}})/K')$ is equal to $\Gamma$. In the same way, the Galois correspondence also provides a Galois extension $F/K'$ with $F \subseteq K(A_{\mathrm{tors}})$ such that $\mathrm{Gal}(F/K')$ is isomorphic to the product of the groups $G_p$ for $p\in \mathcal{P}$. This product has finite exponent, but does not contain any abelian subgroup of finite index.

\section{Proof of Corollary  \ref{cor-main}}\label{proof-cor}
For a number field $k$ and a positive integer $e$, let $k^{(e)}$ denote the compositum of all extensions of $k$ of degree at most $e$ and let $k^{(e)}_{\mathrm{ab}}$ be the maximal abelian extension of $k$ contained in $k^{(e)}$.

We recall that, from the proof of \cite[Proposition 2.1]{CZ}, it follows that any abelian extension of $\Q$ having Galois group of exponent at most $e$ is contained in $\Q^{(e)}_{\mathrm{ab}}$. We repeat that proof here, the only difference being that the base field $\Q$ is replaced by any number field $L$. 

Let $F/L$ be a possibly infinite abelian extension of a number field $L$ and suppose that $G=\text{Gal}(F/L)$ has exponent at most $e$. Clearly, $F$ equals the compositum of all its Galois subextensions $L\subseteq F'\subseteq F$ with 
$G'=\text{Gal}(F'/L)$ finite and
abelian of exponent at most $e$. We can thus write $G'=\prod_{i=1}^{n}U_i$ as a direct product of finite cyclic groups $U_i$ of order at most $e$.
Let $H_i$ be the subgroup of $G'$ defined by $H_i:=\prod_{j\neq
i}U_j$. We have that $[G':H_i]=|U_i|\leq e$ for all $i$'s and $\cap_{i=1}^n H_i=\{\mathrm{id}\}$.
Thus, $F'$ equals the compositum of the fields $F'^{H_{1}},\ldots,F'^{H_{n}}$,
which are abelian extensions of $L$ of degrees $[F'^{H_{i}}:L]=[G':H_i]\leq e$. Hence $F'\subseteq L^{(e)}_{\mathrm{ab}}$ for every $F'$ as above,  which implies that $F\subseteq
L^{(e)}_{\mathrm{ab}}$.

Now let $F$ be a subfield of $k(G_{\mathrm{tors}})$ such that $F/k$ is Galois and $\mathrm{Gal}(F/k)$ has exponent at most $e$.

If $G=\mathbb{G}_m$, then $\mathrm{Gal}(F/k)$ is abelian and, by the above-mentioned results, $F\subseteq k^{(e)}_{\mathrm{ab}}$. 

If $G=A$, then, by Theorem \ref{mainav}, there exists a finite extension $M/k$, depending only on $A$, $k$, and $e$, such that $F\subseteq M^{\mathrm{ab}}$.  
Setting $L=F\cap M$, we have that $\Gal(F/L)$ is abelian and has exponent at most $e$ as a subgroup of $\Gal(F/k)$. So again $F\subseteq L^{(e)}_{\mathrm{ab}}$. 

We now conclude the proof using the fact that fields of the form $L^{(e)}_{\mathrm{ab}}$ have property (N) by \cite[Theorem 1]{BZ}.

\section*{Acknowledgements}

The authors thank Michel Brion, Philipp Habegger, Samuel Le Fourn, David Loeffler, Davide Lombardo, Thomas Scanlon, and Umberto Zannier  for useful discussions and correspondence and for their kind interest in this work.

This article is mostly based upon work supported by the National Science Foundation under Grant Nos. 1440140 and DMS-1928930 while the authors were in residence at the Mathematical Sciences Research Institute in Berkeley, California, during the Spring 2023 semester. We thank the organisers of the ``Diophantine Geometry" program for giving us the opportunity to participate in this program and the MSRI staff for providing an excellent environment for mathematical collaborations.

Sara Checcoli's work has been also supported by the French National Research Agency in the framework of the \textit{Investissements d'avenir} program (ANR-15-IDEX-02, IRGA project \emph{ProBoNo}). Gabriel Dill has received funding from the European Research Council (ERC) under the European Union's Horizon 2020 research and innovation programme (grant agreement n$^\circ$ 945714).

\end{document}